\documentclass[11pt, reqno]{amsart}
\usepackage[T1]{fontenc}
\usepackage{amsfonts}
\usepackage{amsmath}
\usepackage{amsthm}
\usepackage{amssymb}
\usepackage{geometry}
\usepackage{graphicx}
\usepackage{xcolor}
\usepackage{mathtools}
\usepackage[colorlinks=true, linkcolor=blue, citecolor=black]{hyperref}
\usepackage{cleveref}
\usepackage[english]{babel}
\usepackage{lineno}
\usepackage{float}

\newtheorem{theorem}{Theorem}[section]

\newtheorem{lemma}[theorem]{Lemma}

\newtheorem{proposition}[theorem]{Proposition}

\usepackage{tikz}
\usetikzlibrary{positioning}
\usetikzlibrary{decorations,arrows}
\usetikzlibrary{decorations.markings}
\numberwithin{equation}{section}

\usepackage{rustic}
\usepackage[T1]{fontenc}

\emergencystretch=\maxdimen
\title{Strong edge coloring of graphs with maximum degree $6$}
\author{Runze Wang}
\address[]{Department of Mathematics and Computer Science, Augustana College, Rock Island, IL 61201, USA}
\email{runze.w@hotmail.com}
\thanks{}
\subjclass[2020]{05C15}

\begin{document}

\sloppy

\begin{abstract}
    Let $G$ be a graph. Under a strong edge coloring of $G$, every color class is an induced matching. The strong chromatic index of $G$, denoted by $\chi'_s(G)$, is the smallest integer $k$ such that $G$ admits a strong edge coloring with $k$ colors. Denote by $\Delta(G)$ the maximum degree of $G$. In this paper, we prove that every graph $G$ with $\Delta(G)\le 6$ satisfies $\chi'_s(G)\le 57$, improving the best known upper bound $60$.
\end{abstract}
\keywords{strong edge coloring; strong chromatic index; induced matching}

\maketitle

\section{Introduction}

All graphs in this paper are finite and simple. Strong edge coloring was introduced by Fouquet and Jolivet~\cite{FJ}. For a graph $G=(V(G),E(G))$, a \emph{strong edge coloring} of $G$ is an edge coloring such that every color class is an induced matching. Equivalently, it is a proper vertex coloring of $L(G)^2$, the square of the line graph of $G$. The \emph{strong chromatic index} of $G$, denoted by $\chi'_s(G)$, is the smallest integer $k$ such that $G$ admits a strong edge coloring with $k$ colors. When giving $G$ a strong edge coloring, if $G$ is not connected, then we can color each of its components separately. Thus, in this paper, we assume every graph to be connected.

For $v\in V(G)$, denote by $d(v)$ the degree of $v$. Denote by $\Delta(G)$ the maximum degree of $G$. Erd\H{o}s and Ne\v{s}et\v{r}il~\cite{EN} conjectured that
\begin{align*}
\chi'_s(G)\le
\begin{cases}
\frac{5}{4}\Delta^2, & \text{if $\Delta$ is even},\\
\frac{1}{4}(5\Delta^2-2\Delta+1), & \text{if $\Delta$ is odd},
\end{cases}
\end{align*}
where $\Delta=\Delta(G)$. If true, the conjecture upper bound will be sharp, as it can be attained by a balanced blow-up of $C_5$.

For this conjecture, the case $\Delta\le 2$ is trivial. The case $\Delta=3$ was proven by Andersen~\cite{An} and independently by Hor\'ak, He, and Trotter~\cite{HHT}. For $\Delta=4$, Huang, Santana, and Yu~\cite{HSY} proved the upper bound $21$, one more than the conjectured value $20$. For $\Delta=5$, Zang~\cite{Za} proved the upper bound $37$, while the conjectured value is $29$. For other recent progress on strong edge coloring, see e.g.~\cite{CCZZ,CHYZ,LL,LLH,LLY,NY,Wa1,Wa2}.

By an elementary greedy coloring, we have $\chi'_s(G)\le 2\Delta(\Delta-1)+1$. Fouquet and Jolivet~\cite{FJ} proved that if $G$ is not isomorphic to $C_5$ or $K_2$, then
\begin{align*}
\chi'_s(G)\le2\Delta(\Delta-1).
\end{align*}
In particular, for $\Delta=6$, this gives the upper bound $60$.

Our main result lowers this upper bound to $57$.

\begin{theorem}\label{thm:main}
Let $G$ be a graph with $\Delta(G)\le 6$. Then
\begin{align*}
\chi'_s(G)\le 57.
\end{align*}
\end{theorem}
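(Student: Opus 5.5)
We argue by contradiction with a minimal counterexample. Let $G$ have $\Delta(G)\le 6$ and $\chi'_s(G)>57$, with $|E(G)|$ as small as possible. Then every proper subgraph of $G$ has a strong edge coloring with the palette $[57]$. For an edge $e=uv$, let $N_2(e)$ be the set of edges at distance at most one from $e$ (sharing an endpoint with $e$, or joined to $e$ by an edge), so that $e$ conflicts exactly with $N_2(e)$. Trivially $|N_2(e)|\le 2\cdot 5+2\cdot 5\cdot 5=60$. The strategy is the standard one: delete an edge (or a small configuration), color the rest by minimality, and show the deleted edges can be colored back. For this we need $|N_2(e)|\le 56$, or more generally enough repeated colors inside $N_2(e)$ that at most $56$ distinct colors are seen.

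\textbf{Step 1 (local density).} First I would show that $|N_2(e)|\le 56$ for an edge $e$ is reducible: delete $e$, color $G-e$, and extend. Hence in $G$ every edge has $|N_2(e)|\ge 57$. Since the total is at most $60$ and each missing unit of degree or each short cycle costs edges, this gives strong structure:
\begin{itemize}
\item every vertex has degree $6$, or degree-deficient vertices are very rare and isolated;
\item the number of triangles and $4$-cycles through any edge is bounded by a small constant.
\end{itemize}
Concretely, a triangle $uvw$ on $e=uv$ reduces $|N_2(e)|$ by at least $2$, a $4$-cycle through $e$ by at least $1$, and a vertex of degree $\le5$ at distance $\le1$ loses at least $5$ or $1$. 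From this I would derive that $G$ is $6$-regular with only a few short cycles near each edge.

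\textbf{Step 2 (recoloring and double counting).} When $|N_2(e)|\in\{57,\dots,60\}$, the goal is to precolor or recolor so that some color is repeated in $N_2(e)$. Two edges $f,g\in N_2(e)$ at distance at least $2$ from each other may share a color. The classical trick of Fouquet--Jolivet saves one color this way. To save three colors, I would delete a carefully chosen small edge set $F$ around a vertex $v$, for instance all six edges at $v$, or a star together with a nearby edge. I would then color $G-F$ and extend to $F$ using a list-coloring argument: each $f\in F$ has a list $L(f)$ of available colors with $|L(f)|\ge 57-(|N_2(f)|-|N_2(f)\cap F|)$, and I would apply Hall's theorem or a Kierstead-type greedy ordering on the conflict graph induced by $F$. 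The freedom in coloring $G-F$ would be used to force two far-apart edges in $N_2(f_0)$, for a last edge $f_0$, to receive the same color. This is done by first identifying their endpoints, or by coloring them first with a common color when they are nonadjacent in $L(G)^2$.

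\textbf{Main obstacle.} The hardest part is the dense local case, where $G$ is $6$-regular and the neighborhood of an edge is nearly tree-like, so that $|N_2(e)|\ge 58$ for all $e$. There, savings must come purely from forcing repeated colors. I would first try a probabilistic or Kahn-style count, since $57$ leaves slack. If that fails, I would fix a vertex $v$ with neighbors $v_1,\dots,v_6$. The sets $N(v_i)\setminus\{v\}$ contain many pairs of vertices at distance $\ge 3$ from each other in the appropriate sense. I would pick three disjoint such pairs of edges, one pair for each saved color, color each pair identically before everything else, and then finish greedily with the edges at $v$ last. The delicate point is verifying, through a case analysis on triangles and $4$-cycles among the $v_i$, that the needed far-apart pairs always exist. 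I expect this case analysis to dominate the paper.
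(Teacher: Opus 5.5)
What you have is a strategy outline rather than a proof, and two load-bearing steps fail as stated.

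First, the reduction in Step~1 is invalid. The strong edge coloring of $G-e$ supplied by minimality need not be a partial strong edge coloring of $G$. An edge at $u$ and an edge at $v$ conflict in $G$ through $e=uv$, but they can be far apart in $G-e$ and share a color. So $|N_2(e)|\le 56$ does not make $e$ reducible, and the structural consequences you draw from it are unsupported. The paper avoids minimality altogether: it embeds $G$ in a $6$-regular supergraph, and every greedy coloring it performs measures conflicts in $G$ itself.

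Second, the case you single out as the main obstacle (regular and locally tree-like) is left open. Kahn-type and probabilistic arguments are asymptotic and give nothing usable at $\Delta=6$. The paper does not fight this case at all: for girth at least $5$ it invokes Zang's bound $\chi'_s(G)\le 2\Delta^2-3\Delta+2=56$. Its real work lies in the triangle-free $4$-cycle case.

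Your fallback (precolor three pairs, then finish greedily with the edges at $v$ last) does not close either. The greedy order that works is the rooted one, by decreasing distance from $v$. Take an edge $xy$, with $x$ its endpoint nearer to $v$ and $u$ the predecessor of $x$. It sees at most $54$ colored conflicts precisely because all six edges at $u$ are still uncolored, so $55$ colors suffice for $G-v$. In the tree-like case, the only edges conflicting with all six root edges $vw_i$, besides the root edges themselves, are those of $B$ (edges from $N(v)$ to the second neighborhood). Each of these is incident with some $w_i$, which is the predecessor for the next layer. So precoloring them eats the slack just described, while reserving three colors for them leaves only $54$ for a greedy that needs $55$. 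Numerically, with $|N_2(vw_i)|=60$, three savings give each root edge a list of size only $57-(55-3)=5$. Six pairwise conflicting edges with such lists need not have distinct representatives; you need four savings.

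The missing mechanism is the paper's. Color $G-v$ with $55$ old colors in the rooted order, then \emph{erase} selected $B$-edges and recolor them with the two spare colors $56,57$. This gives $|L_i|\ge 2+q_i+t-k$, using the exact count $|N_s(vw_i)|=60-q_i$. Here $q_i$ is the number of $4$-cycles through $vw_i$, and $q_1,q_2\ge 1$ because $v$ lies on a $4$-cycle. The Hall condition $c+q_{(j)}\ge j$ is then verified by a case analysis on the non-conflict graph $K$ on $B$: triangle, $2$-matching, star, edgeless, plus the sparse case $\max q_i\le1$ via private vertices. Triangles are handled separately by rooting at a triangle vertex and erasing the colors at the other two.
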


It suffices to prove this theorem for $6$-regular graphs, as for any positive integer $k$ and any graph $H$ with $\Delta(H)\le k$, $H$ is a subgraph of a $k$-regular graph $H'$; and it is clear that $\chi'_s(H)\le \chi'_s(H')$ for any $H\subseteq H'$.

For graphs of girth at least $5$, we can apply the following lemma by Zang~\cite{Za}.

\begin{lemma}[Zang~\cite{Za}, Lemma~3.2]
If $G$ is a $\Delta$-regular graph with $\Delta\ge 3$ and girth at least $5$, then
\begin{align*}
    \chi'_s(G)\le2\Delta^2-3\Delta+2.
\end{align*}
\end{lemma}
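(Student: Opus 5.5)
The plan is a greedy coloring with $k=2\Delta^2-3\Delta+2$ colors, with a few colors pre-assigned near one chosen edge so that the edges colored last see repeated colors. First I record the exact local structure forced by girth at least $5$. For an edge $e=uv$, the edges conflicting with $e$ in $L(G)^2$ are the $2(\Delta-1)$ edges incident to $u$ or $v$, together with the edges incident to some $x\in N(u)\cup N(v)\setminus\{u,v\}$ but not to $u$ or $v$. Since $G$ is $\Delta$-regular and has no $3$- or $4$-cycles, all these edges are distinct. So $e$ has exactly
\begin{align*}
2(\Delta-1)+2(\Delta-1)^2=2\Delta^2-2\Delta
\end{align*}
conflicting edges. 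To color $e$ last, I must make at least $\Delta-1$ colors repeat among them.

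\textbf{Step 1 (generic edges).} Fix a root edge $e_0=uv$. Order the edges by non-increasing distance from $e_0$ in $L(G)$, and color greedily. Any edge $f\neq e_0$ that is not too close to $e_0$ has a neighbor (in $L(G)$) strictly closer to $e_0$. That neighbor is still uncolored when $f$ is colored, so $f$ sees at most $2\Delta^2-2\Delta-1$ colored conflicting edges. Since $k\ge 2\Delta^2-2\Delta$ when $\Delta\ge 2$, this step succeeds.

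In fact I would peel the order more carefully. The edges at $L(G)$-distance $1$ or $2$ from $e_0$ should be colored last, and I would count for each of them how many of its conflicting edges remain uncolored. An edge at distance $2$ still has its distance-$1$ neighbor and $e_0$ uncolored, plus possibly more. Edges at distance $1$ (incident to $u$ or $v$) have many uncolored conflicts among the other edges at $u$ and $v$. Working out these counts shows how much slack each late edge has relative to $k$, and so where repetitions are actually needed.

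\textbf{Step 2 (forcing repetitions around $e_0$).} Before running the greedy order, I pre-color $\Delta-1$ pairs of edges in the distance-$2$ ring of $e_0$ with a common color per pair. Take neighbors $x\in N(u)\setminus\{v\}$ and $y\in N(v)\setminus\{u\}$, and edges $xx'$ and $yy'$ with $x',y'\notin\{u,v\}$. By girth at least $5$, $xy\notin E(G)$. I need $x'\neq y'$, $x'y'\notin E(G)$, $xy'\notin E(G)$ and $x'y\notin E(G)$. Each $x$ has $\Delta-1$ choices of $x'$, and girth constrains how many of these can be bad, so a counting or Hall-type argument should give $\Delta-1$ vertex-disjoint such pairs. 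These pairs pairwise conflict, so they get $\Delta-1$ distinct colors; each pair is then a single color repeated in the neighborhood of $e_0$. Then $e_0$ sees at most $2\Delta^2-2\Delta-(\Delta-1)=2\Delta^2-3\Delta+1<k$ colors.

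\textbf{Main obstacle.} The hard part is the edges incident to $u$ and $v$, colored just before $e_0$. The pre-colored pairs do not lie in their neighborhoods symmetrically, and near the end each such edge may have only one or two uncolored conflicts. I would handle this in two ways. First, order the edges at $u$ and $v$ so that each still has several uncolored conflicts (the remaining star edges and $e_0$). Second, choose the pairs so that each star edge $ux$ or $vy$ contains a pre-colored pair in its own neighborhood. If counting still falls short by a color or two for the final star edges, I would use a swapping or recoloring argument on one pre-colored pair: uncolor it and recolor after the star edges. Failing that, I would choose the root edge $e_0$ more cleverly, for example one lying on a $5$-cycle, so that extra coincidences arise.
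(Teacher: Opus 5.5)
The paper does not prove this lemma; it quotes it from Zang. So your argument has to stand on its own, and as written it has two genuine gaps.

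\textbf{Step~1.} It rests on a false inequality: $k=2\Delta^2-3\Delta+2=(2\Delta^2-2\Delta)-(\Delta-2)$, so $k<2\Delta^2-2\Delta$ for every $\Delta\ge3$ (for $\Delta=6$, $k=56$ against $60$). An edge with only one guaranteed uncolored conflict may therefore see $2\Delta^2-2\Delta-1\ge k$ colored ones. The bulk can be rescued with the argument of Lemma~\ref{lem:rooted}. Ordering by $L(G)$-distance from $e_0$ is the same as ordering by the distance of the nearer endpoint to $\{u,v\}$. Hence all $\Delta$ edges at the predecessor $p$ of that endpoint are still uncolored and conflict with the current edge, leaving at most $2\Delta^2-3\Delta=k-2$ colored conflicts. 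That is exactly one unit of slack, and a precolored edge at $p$ uses it up. So no vertex may carry two precolored edges, a condition your argument never states or checks.

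\textbf{The star edges.} This is the more serious gap: the star edges are not handled, and cross pairs cannot handle them. The $2\Delta-1$ edges at $u$ or $v$ form a clique in $L(G)^2$. Each star edge $f$ has exactly $2(\Delta-1)^2$ conflicts outside this clique, so before the clique is colored its list has size $\Delta+s_f$, where $s_f$ counts the repeats among those colored conflicts. For $x''\in N(u)\setminus\{v\}$, girth gives $yy'\in N_s(ux'')$ only if $x''y'\in E(G)$. So a cross pair contributes a single edge to $N_s(ux'')$, and nothing guarantees $s_f>0$ for any star edge. The $2\Delta-2$ star edges may then share one list of size $\Delta<2\Delta-2$. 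No ordering helps, since they form a clique, and the swapping and $5$-cycle remarks are not arguments.

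\textbf{What is needed.} Precolored ring edges are seen by all star edges on their side. So Hall's condition for this clique, with nested lists as the worst case, needs $\Delta-2$ repeats at every star edge of one side and $\Delta-1$ at $e_0$. At most $\Delta-1$ ring edges on a side can be precolored, so those $\Delta-2$ repeats must come from one color on an induced matching $\{xx^*:x\in N(u)\setminus\{v\}\}$, not from separately colored pairs. Such a matching exists greedily: any vertex other than $x'$ has at most one neighbor in $N(x')$, so at each step at most $\Delta-2$ of the $\Delta-1$ candidates are blocked. One further repeat at $e_0$ is still required. For example, put a second color on two nonconflicting ring edges at $v$ that avoid the vertices of that matching; you would still have to verify these exist, especially for $\Delta=3$. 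This gives lists of sizes at least $\Delta+1$, $2\Delta-2$ and $2\Delta-1$ for the star edges at $v$, those at $u$, and $e_0$, which can then be colored greedily in that order.
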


So, for a $6$-regular graph $G$ of girth at least $5$,
\begin{align*}
\chi'_s(G)\le2\cdot 6^2-3\cdot 6+2=56.
\end{align*}
Accordingly, it remains only to consider $6$-regular graphs of girth at most $4$. In Section \ref{sec:triangle}, we study graphs containing a triangle. In Section \ref{sec:C4}, we have the main part of the proof that concerns triangle-free graphs containing a $4$-cycle.

\section{A rooted partial coloring and the triangle case}\label{sec:triangle}

In a graph $G$, for two distinct edges $e$ and $e'$, we define the distance between $e$ and $e'$ to be the distance between the corresponding vertices in the line graph $L(G)$. Thus, if $e$ and $e'$ are incident, then they have distance $1$; if $e$ and $e'$ are not incident but are connected by a third edge, then they have distance $2$. We say that $e$ and $e'$ \emph{conflict} if their distance is at most $2$. Thus, two edges conflict if and only if they cannot receive the same color in a strong edge coloring. For $e\in E(G)$, let $N_s(e)$ denote the set of edges in $G$ that conflict with $e$. We call $N_s(e)$ the \emph{strong neighborhood} of $e$.

For a vertex $v\in V(G)$, denote by $G-v$ the subgraph of $G$ obtained by removing $v$ and all edges incident with $v$ from $G$. In the following lemma, we give $G-v$ a rooted greedy strong edge coloring. This lemma is a special case of Lemma 2.1 in \cite{Za}, applied to $6$-regular graphs. We include a proof for completeness.

\begin{lemma}\label{lem:rooted}
Let $G$ be a $6$-regular graph and let $v\in V(G)$. Then there is a strong edge coloring of $G-v$ with $55$ colors.
\end{lemma}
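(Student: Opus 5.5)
The plan is a greedy coloring of $E(G-v)$ along a breadth-first ordering from $v$, so that every edge, when it is colored, still has at least $6$ of its potential conflicts either absent from $G-v$ or not yet colored. The starting point is the usual count. In a $6$-regular graph an edge $e=xy$ is incident with at most $5+5=10$ other edges. Each of these has at most $5$ further edges at its far endpoint, giving at most $10\cdot 5=50$ edges at distance $2$. Hence $|N_s(e)|\le 60$. We need to save $6$ of these, so that at most $54$ conflicting edges are already colored and one of $55$ colors is free.

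First, let $\mathrm{dist}(u)$ denote the distance in $G$ from $u$ to $v$; we may assume $G$ is connected. For an edge $e=xy$ of $G-v$, label it so that $\mathrm{dist}(y)\le \mathrm{dist}(x)$, and set $\ell(e)=\mathrm{dist}(y)$. Since neither endpoint equals $v$, we have $\ell(e)\ge 1$. We color the edges of $G-v$ greedily in non-increasing order of $\ell$, breaking ties arbitrarily.

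Next, consider the moment $e=xy$ is colored. Since $\mathrm{dist}(y)\ge 1$, we can fix a neighbor $z$ of $y$ with $\mathrm{dist}(z)=\mathrm{dist}(y)-1$. All $6$ edges incident with $z$ conflict with $e$: the edge $yz$ is incident with $e$, and each other edge at $z$ is joined to $e$ through $yz$. We then check that none of these $6$ edges is colored at this time. If $z=v$, these edges are not in $G-v$ at all. If $z\neq v$, each edge $f$ at $z$ has an endpoint $z$, so $\ell(f)\le \mathrm{dist}(z)<\ell(e)$, and $f$ comes after $e$ in the ordering.

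Therefore at most $60-6=54$ edges of $N_s(e)$ carry colors when $e$ is colored, and some color among $55$ is available. The resulting coloring is strong, because each edge avoids the colors of all earlier edges in its strong neighborhood. The only point needing care is that the six saved edges are pairwise distinct from the rest of the count. They are, because we subtract them from the upper bound of $60$ on $|N_s(e)|$ rather than from an exact count.
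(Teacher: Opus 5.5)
Your proof is correct and is essentially the paper's argument: the same greedy order by decreasing distance of the closer endpoint from $v$, and the same observation that the six edges at the predecessor $z$ of the closer endpoint are all still uncolored, giving at most $60-6=54$ colored conflicts. Your framing as ``$|N_s(e)|\le 60$ with six distinct uncolored members'' is a tidier version of the paper's itemized count $4+5+4\cdot 5+5\cdot 5$. One small caveat: connectedness is not a genuine without-loss-of-generality reduction for this lemma, since another component would contain no root. It is, however, the paper's standing convention, so your argument stands.
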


\begin{proof}
For two vertices $v_1,v_2\in V(G)$, denote by $dist_G(v_1,v_2)$ the distance between $v_1$ and $v_2$. For an edge $e=xy$, define
\begin{align*}
\rho(e)=\min\{dist_G(v,x),dist_G(v,y)\}.
\end{align*}
Order the edges in $G-v$ in decreasing order of $\rho(\cdot)$, with ties broken arbitrarily. We will color them greedily in this order. Conflicts are always measured in the original graph $G$.

Let $e=xy$ be an edge that is about to be colored. Choose the notation such that
\begin{align*}
dist_G(v,x)=\rho(e)=r.
\end{align*}
Since $e$ is not incident with $v$, we have $r\ge1$. Let $u$ be the predecessor of $x$ on a shortest $v$--$x$ path. Thus
\begin{align*}
dist_G(v,u)=r-1.
\end{align*}
It is possible that $u=v$. Moreover $u\ne y$, since otherwise $\rho(e)=dist_G(v,y)=r-1$.

Every edge incident with $u$ is still uncolored. Indeed, for $uw\in E(G)$, if it is incident with $v$, which means $u=v$ or $w=v$, then it is deliberately left uncolored, because in this lemma we only color the edges in $G-v$; if it is not incident with $v$, then
\begin{align*}
\rho(uw)\le dist_G(v,u)=r-1<r=\rho(e),
\end{align*}
so $uw$ occurs later in our decreasing-$\rho$ order, and it will be colored later.

We now bound the number of already colored edges that conflict with $e$. Among the edges incident with $x$, both $e$ and $xu$ are uncolored, so there are at most $6-2=4$ colored edges incident with $x$. Among the edges incident with $y$, $e$ is uncolored, so there are at most $6-1=5$ colored edges incident with $y$.

Consider next conflicts at distance $2$. Every edge incident with $u$ is still uncolored. Each of four neighbors of $x$ other than $u$ and $y$ can contribute at most five further edges, and each of the five neighbors of $y$ other than $x$ can contribute at most five further edges. Therefore the number of colored edges that conflict with $e$ is at most
\begin{align*}
4+5+4\cdot 5+5\cdot 5=54.
\end{align*}
Hence there is a feasible color for $e$ as we have $55$ colors. Greedy coloring therefore succeeds for every edge not incident with $v$.
\end{proof}

Now we can solve the case where $G$ has girth $3$.

\begin{proposition}\label{prop:triangle}
Let $G$ be a $6$-regular graph containing a triangle. Then
\begin{align*}
\chi'_s(G)\le57.
\end{align*}
\end{proposition}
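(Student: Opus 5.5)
The plan is to pick a triangle $vxy$ in $G$ and start from a coloring of $G-v$. We run the greedy procedure from the proof of Lemma~\ref{lem:rooted}, rooted at $v$, but now with the palette $\{1,\dots,57\}$. Every edge of $G-v$ is still colored, since each edge sees at most $54$ colored conflicting edges. It then remains to extend the coloring to the six edges $vw_1,\dots,vw_6$ at $v$, where $w_1=x$ and $w_2=y$. These six edges pairwise conflict, so they need six distinct colors, each avoiding the colors already used in its strong neighborhood.

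\emph{Step 1: list sizes.} We count the colored edges conflicting with each $vw_i$, using that all edges at $v$ are uncolored.
\begin{itemize}
\item For a generic neighbor $w$, the count is $5$ (at $w$) $+5\cdot 5$ (at the other neighbors of $v$) $+5\cdot 5$ (at the neighbors of $w$ other than $v$), which is $55$. So $vw$ keeps at least $2$ available colors.
\item For $vx$, the edge $xy$ and the edges at $y$ are counted twice, and $y$ is simultaneously a neighbor of $v$ and of $x$. The same count drops to roughly $5+4+4\cdot5+4\cdot5=49$, so $vx$, and symmetrically $vy$, keeps at least $8$ available colors.
\end{itemize}
Color the generic edges first and $vx,vy$ last. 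Then $vx$ and $vy$ are harmless, since $8-4\ge 2$ colors remain for each of them.

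\emph{Step 2: the main obstacle.} The difficulty is the four generic edges $vw_3,\dots,vw_6$. Each has only a guaranteed list of size $2$, and they must receive distinct colors. I would try two tools, in this order.
\begin{itemize}
\item \emph{Exploit the slack in the greedy order.} Inside the last layer ($\rho=1$), order the edges so that the edges at $x$ and $y$ are colored last. Then leave those edges, together with the six edges at $v$, uncolored. This enlarges the lists of the $vw_i$, because each $vw_i$ with $w_i$ adjacent to $x$ or $y$ loses colored conflicts. The uncolored edges at $x$ and $y$ themselves each lose a conflict through the triangle, so they retain extra slack to be recolored at the end.
\item \emph{Count repeated colors.} If a list $L(vw_i)$ is small, then few colors are repeated in $N_s(vw_i)$. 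If two of the lists are small and equal, then by a Hall-type count, some edge in their common neighborhood can be recolored to free a color; this recoloring is possible because every colored edge had at least $2$ spare colors among $57$.
\end{itemize}

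\emph{Step 3: conclude via Hall's condition.} Once the uncolored set consists of the six edges at $v$ plus a few edges at $x$ and $y$, I would verify Hall's condition for the conflict structure. The conditions to check are $|L(vw_i)|\ge 2$, $|L(vw_i)\cup L(vw_j)|\ge$ the number of edges competing for those colors, and so on. Any deficient configuration should then be ruled out using the triangle's double counting. This case analysis, which forces distinct colors on the four generic edges at $v$, is where I expect most of the work.
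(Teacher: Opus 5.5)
There is a genuine gap: the proposal stops exactly where the proof has to happen. After the rooted greedy coloring, the four edges $vw_3,\dots,vw_6$ have guaranteed lists of size only $2$. The true bound is $3$, because $xy$ is counted both through $x$ and through $y$. Nothing in your argument prevents all four lists from being the same three colors, and then these edges cannot receive distinct colors. Step~2 does not repair this. The recoloring bullet assumes every colored edge still has spare colors. But the at least three choices an edge had when it was greedily colored can all be blocked by edges of its strong neighborhood colored afterwards, so no recoloring is guaranteed. Step~3 is a case analysis that is announced but not carried out.

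The missing idea sits in your first bullet, whose effect you underestimate. Every edge incident with $x$ or $y$ lies in $N_s(vw_i)$ for \emph{every} $i$ (through $vx$ or $vy$), not only when $w_i$ is adjacent to $x$ or $y$. Push this all the way, as the paper does. Erase the colors of all $15$ edges meeting the triangle $vxy$, and recolor them greedily from $\{1,\dots,57\}$ in a fixed order; no Hall argument is needed.

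First color the $12$ edges with exactly one endpoint on the triangle. For such an edge $e$, the count $10+2\cdot 25=60$ counts the triangle edge disjoint from $e$ twice, so $|N_s(e)|\le 59$. All three triangle edges lie in $N_s(e)$ and are still uncolored, so at most $56$ colored edges conflict with $e$. Then color the three triangle edges. Each triangle edge $f$ has $|N_s(f)|\le 60-6=54$, because the other two triangle edges and the four further edges at the third vertex are each counted twice.

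The order is essential. The single repetition through the triangle gives only $|N_s(e)|\le 59$, which is not enough slack on its own. If, as your Step~2 suggests, the edges at $x$ and $y$ are recolored at the end after $vx$ and $vy$, the last of them can face up to $58$ colored conflicts. That exceeds the $56$ that $57$ colors can absorb.
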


\begin{proof}
Let
\begin{align*}
T=abc
\end{align*}
be a triangle, where $a,b,c\in V(G)$. Applying Lemma~\ref{lem:rooted} with root $a$, we obtain a partial strong edge coloring of $G$ using colors $1,2,\ldots,55$ where the six edges incident with $a$ are uncolored. Now erase the colors from every colored edge incident with $b$ or $c$. Thus precisely the edges incident with at least one vertex in $T$ are uncolored.

There are three edges in $T$. Since $G$ is $6$-regular, there are $12$ edges with exactly one endpoint in $V(T)$. First, we use the palette $\{1,2,\ldots,57\}$ to color the $12$ edges with exactly one endpoint in $V(T)$. Consider one such edge, say $e=ax$ with $x\notin V(T)$. There are at most $60=10+2\cdot 25$ edges in the strong neighborhood of $e$: $10$ edges sharing an endpoint with $e$, and at most $25$ further edges through each endpoint. However, in this count the edge $bc$ is encountered through both $ab$ and $ac$. Thus at least one repetition is forced and
\begin{align*}
|N_s(e)|\le59.
\end{align*}
All three triangle edges $ab,ac,bc$ conflict with $e$, and all three of them remain uncolored while the $12$ edges with exactly one endpoint in $V(T)$ are being colored. Therefore, whenever one of these $12$ edges is being colored, there are at most $59-3=56$ colored edges conflicting with it. Hence these $12$ edges can be colored greedily with $57$ colors.

It remains to color the three triangle edges. Consider $ab$. In the standard $60=10+2\cdot 25$ count, $bc$ is counted once among the edges incident with $b$ and once again through the neighbor $c$ of $a$; similarly, $ac$ is counted once among the edges incident with $a$ and once again through the neighbor $c$ of $b$. In addition, each of the four edges in the form $cx$ with $x\notin V(T)$ is counted through both the $a$-side and the $b$-side. Hence at least six repetitions occur, and therefore
\begin{align*}
|N_s(ab)|\le60-6=54.
\end{align*}
The same argument applies to $ac$ and $bc$. Thus the three triangle edges may be colored greedily, completing a strong $57$-edge coloring of $G$.
\end{proof}

\section{The triangle-free four-cycle case}\label{sec:C4}

For the rest of the proof assume that $G$ is triangle-free and contains a $4$-cycle. For a vertex $v\in G$, denote by $N(v)$ the neighborhood of $v$. For $V_1,V_2\subseteq V(G)$, denote by $E(V_1,V_2)$ the set of edges with one endpoint in $V_1$ and the other endpoint in $V_2$. For discrete intervals, let $[k]$ denote $\{1,2,\dots,k\}$.

Choose a vertex $v$ on a $4$-cycle and let
\begin{align*}
D_1=N(v)=\{w_1,w_2,\ldots,w_6\}.
\end{align*}
Assume that $w_1$ and $w_2$ are the two neighbors of $v$ in the $4$-cycle. Since $G$ is triangle-free, $D_1$ is an independent set. Let $D_2$ be the set of vertices at distance exactly $2$ from $v$, and for $i\in[6]$ let
\begin{align*}
e_i=vw_i,
\qquad
W_i=N(w_i)\cap D_2.
\end{align*}
We know that $|W_i|=5$ because every $w_i$ has degree $6$, one of its neighbors is $v$, and it has no neighbor in $D_1$.

Let
\begin{align*}
    B=E(D_1,D_2).
\end{align*}
Then $|B|=30$.

For $i\in[6]$, define
\begin{align}
q_i=|E(D_1\setminus\{w_i\},W_i)|.
\label{eq:qdef}
\end{align}
As $w_1$ and $w_2$ have a common neighbor in $D_2$, which is the fourth vertex other than $v,w_1,w_2$ in the $4$-cycle, we have
\begin{align}
q_1\ge1,
\qquad
q_2\ge1.
\label{eq:qpositive}
\end{align}

\subsection{The exact strong neighborhood of a root edge}

Let $v$ be the root of $G$. Then $e_1,e_2,\dots,e_6$ are the root edges.

\begin{lemma}\label{lem:exact}
For each $i\in[6]$,
\begin{align*}
|N_s(e_i)|=60-q_i.
\end{align*}
\end{lemma}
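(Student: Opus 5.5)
We compute $N_s(e_i)$ directly by sorting the edges that conflict with $e_i=vw_i$ according to where their endpoints lie. Since $G$ is $6$-regular and triangle-free, $D_1=N(v)$ is independent, and every neighbor of $w_i$ other than $v$ lies in $D_2$, namely in $W_i$. An edge conflicts with $e_i$ exactly when it is incident with $v$ or $w_i$, or when it is incident with a neighbor of $v$ or a neighbor of $w_i$. So
\begin{align*}
N_s(e_i)=\Big(\bigcup_{j\ne i}\{\text{edges at } w_j\}\Big)\cup\Big(\bigcup_{x\in W_i}\{\text{edges at } x\}\Big)\cup\{e_1,\dots,e_6\}\cup E(\{w_i\},W_i)\setminus\{e_i\}.
\end{align*}
We split this set into disjoint classes and count each one.

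\emph{Edges at $v$ or $w_i$.} These are $e_j$ for $j\ne i$ and $w_ix$ for $x\in W_i$, giving $5+5=10$ edges.

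\emph{Edges at some $w_j$, $j\neq i$, other than $e_j$.} By independence of $D_1$ these are exactly the edges of $E(D_1\setminus\{w_i\},D_2)$. Each $w_j$ has $5$ such edges, and two different $w_j$ cannot share such an edge, since both endpoints would then lie in $D_1$. This gives exactly $25$ edges.

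\emph{Edges at some $x\in W_i$, other than $w_ix$.} Every $x\in W_i$ has degree $6$, so it has $5$ further edges, for $25$ in total. We need to check three kinds of overlap.
\begin{itemize}
\item Two vertices $x,x'\in W_i$ could share an edge $xx'$. This would create the triangle $w_ixx'$, which is impossible.
\item An edge at $x$ could coincide with an edge at $v$ or $w_i$. This is impossible, since $x$ is not adjacent to $v$ and the edge $xw_i$ has been excluded.
\item An edge at $x$ could lie in the previous class. This happens exactly when the edge has the form $xw_j$ with $j\ne i$, that is, when it lies in $E(D_1\setminus\{w_i\},W_i)$.
\end{itemize}
The overlap between the last two classes therefore has size exactly $q_i$, as defined in \eqref{eq:qdef}.

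Adding up with inclusion--exclusion gives $|N_s(e_i)|=10+25+25-q_i=60-q_i$. The only delicate step is making sure that $q_i$ captures all the double counting: triangle-freeness rules out edges inside $W_i$ and inside $D_1$, and $6$-regularity makes the counts $5$ and $25$ exact rather than mere upper bounds. Once these points are checked, the argument is routine bookkeeping.
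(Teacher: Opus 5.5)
Your proof is correct and follows essentially the same route as the paper. Both split $N_s(e_i)$ into three parts: the other root edges, the edges of $B$, and the remaining edges at $W_i$. In both, $q_i$ accounts exactly for the $B$-edges from $W_i$ to $D_1\setminus\{w_i\}$.

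The only difference is bookkeeping. You apply inclusion--exclusion to the $25$ non-$w_i$ edges at $W_i$. The paper instead introduces $r(x)=|N(x)\cap D_1|$ and counts the edges outside $B$ directly as $\sum_{x\in W_i}(6-r(x))=25-q_i$.
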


\begin{proof}
For $x\in D_2$, let
\begin{align*}
r(x)=|N(x)\cap D_1|.
\end{align*}
Every $x\in W_i$ is adjacent to $w_i$, and the other $r(x)-1$ edges between $x$ and $D_1$ are precisely those counted in \eqref{eq:qdef}. Therefore
\begin{align*}
q_i=\sum_{x\in W_i}(r(x)-1),
\end{align*}
and hence
\begin{align}
\sum_{x\in W_i}r(x)=5+q_i.
\label{eq:rsum}
\end{align}

We now partition $N_s(e_i)$ into three classes. 

First, it contains the other five edges incident with $v$. 

Second, it contains all $30$ edges in $B$: an edge in $B$ incident with $w_i$ shares an endpoint with $e_i$, while an edge $w_jx\in B$ with $j\ne i$ is at distance $2$ from $e_i$ through the root edge $e_j=vw_j$. 

Third, the remaining edges conflicting with $e_i$ are exactly the edges outside $B$ incident with a vertex in $W_i$. Indeed, any edge conflicting with $e_i$ must be incident with $v$ or $w_i$, or have an endpoint adjacent to $v$ or $w_i$. Because $D_1$ is independent, the edges arising through a neighbor of $v$ are precisely the root edges or the edges in $B$; the only additional edges arising through a neighbor of $w_i$ are the edges incident with $W_i$.

For $x\in W_i$, exactly $r(x)$ of the six edges incident with $x$ belong to $B$, so $6-r(x)$ edges incident with $x$ lie outside $B$. Moreover, no edge has both endpoints in $W_i$, because if $x,y\in W_i$ are adjacent, then $w_i x y w_i$ will be a triangle, a contradiction. Thus, these outside-$B$ edges are counted exactly once, and their number is
\begin{align*}
\sum_{x\in W_i}(6-r(x))
=30-(5+q_i)
=25-q_i
\end{align*}
by \eqref{eq:rsum}. The three classes are pairwise disjoint. Therefore
\begin{align*}
|N_s(e_i)|=5+30+(25-q_i)=60-q_i.
\end{align*}
\end{proof}

\subsection{An erasing-and-recoloring argument and Hall's theorem}

Applying Lemma~\ref{lem:rooted} with root $v$, we can color every edge except $e_1,e_2,\ldots,e_6$ with $55$ colors $1,2,\ldots,55$. We call these $55$ colors \emph{old colors}. Let
\begin{align*}
\alpha=56,
\qquad
\beta=57
\end{align*}
be two \emph{new colors}.

We will occasionally erase the old colors from a set of edges in $B$ and recolor those edges with one or both of the new colors. The following estimate gives a lower bound on the number of colors available for each root edge $e_i$ after the recoloring.

\begin{lemma}\label{lem:erasing}
Let $S\subseteq B$ be a set of $t$ edges. Assume that the old colors on the edges in $S$ are erased and these $t$ edges are recolored using $k=1\text{ or }2$ new colors, where each new color class is an induced matching. For $i\in[6]$, let $L_i$ be the set of colors in $\{1,2,\ldots,57\}$ that are available for $e_i$ after this recoloring. Then
\begin{align*}
|L_i|\ge2+q_i+t-k.
\end{align*}
\end{lemma}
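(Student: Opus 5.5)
Fix $i\in[6]$. The bound will come from counting the colors forbidden to $e_i$. Since the root edges $e_1,\dots,e_6$ are uncolored, every color forbidden to $e_i$ is the color of some colored edge of $N_s(e_i)$. Let $F_i$ be the set of colors appearing on colored edges of $N_s(e_i)$ after the recoloring. Then $|L_i|=57-|F_i|$, so it suffices to show
\begin{align*}
|F_i|\le 55-q_i-t+k.
\end{align*}

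To do this, use the partition of $N_s(e_i)$ from the proof of Lemma~\ref{lem:exact}:
\begin{itemize}
\item the five root edges $e_j$ with $j\ne i$, all uncolored;
\item the $30$ edges of $B$;
\item the $25-q_i$ edges outside $B$ incident with $W_i$.
\end{itemize}
Only the last two classes carry colors, so after the recoloring the colored part of $N_s(e_i)$ has at most $30+25-q_i=55-q_i$ edges. By the convention in Lemma~\ref{lem:exact} that the three classes are disjoint, $S\subseteq B\subseteq N_s(e_i)$.

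Now compare with the coloring before recoloring. The edges of $N_s(e_i)\setminus S$ keep their old colors and contribute at most $55-q_i-t$ distinct colors. The $t$ edges of $S$ now carry only the $k$ new colors $\alpha,\beta$ (or one of them), contributing at most $k$ distinct colors. Hence
\begin{align*}
|F_i|\le (55-q_i-t)+k,
\end{align*}
and therefore
\begin{align*}
|L_i|\ge 57-55+q_i+t-k=2+q_i+t-k.
\end{align*}

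For the new colors to count as available, the recolored partial coloring must still be a valid strong coloring away from $e_i$. This is where the hypothesis is used. Each new color class is an induced matching, and the old colors on $S$ are erased, so no conflict involves a new color among edges of $S$. No edge outside $S$ uses $\alpha$ or $\beta$, so there is no conflict between $S$ and the rest either. The remaining old-colored edges are untouched, so the partial coloring stays strong.

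The expected main obstacle is bookkeeping, namely making sure nothing is double counted. The key fact is that every edge of $S$ lies in $N_s(e_i)$, which holds because all of $B$ conflicts with every root edge, as shown in the proof of Lemma~\ref{lem:exact}. This is what lets the $t$ erased edges be subtracted in full while the new colors are added back at cost only $k$. Beyond this, the argument is a direct count.
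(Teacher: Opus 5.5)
Your proof is correct and is essentially the paper's argument: from the $60-q_i$ edges conflicting with $e_i$, discard the five uncolored root edges and the $t$ edges of $S\subseteq B\subseteq N_s(e_i)$, then add back at most $k$ new colors to get $|L_i|\ge 57-(55-q_i-t+k)$. Your closing paragraph on the recolored partial coloring staying strong is not needed for this bound (the paper's proof does not use the induced-matching hypothesis here), but it does no harm.
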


\begin{proof}
By Lemma~\ref{lem:exact}, exactly $60-q_i$ edges conflict with $e_i$. These include the other five root edges, which remain uncolored, and all $t$ edges in $S$. Thus at most
\begin{align*}
(60-q_i)-5-t
\end{align*}
edges conflicting with $e_i$ retain old colors. They forbid at most $(60-q_i)-5-t$ old colors to be used on $e_i$. The recolored edges in $S$ use $k$ new colors, so in total at most
\begin{align*}
(60-q_i)-5-t+k
\end{align*}
colors are forbidden for $e_i$. Thus
\begin{align*}
|L_i|
\ge57-\bigl((60-q_i)-5-t+k\bigr)
=2+q_i+t-k.
\end{align*}
\end{proof}

We will use the following consequence of Hall's theorem \cite{Ha}.

\begin{lemma}\label{lem:Hall}
Let $L_1,L_2,\ldots,L_6$ be six subsets of $\{1,2,\dots,57\}$ and let $q_1,q_2,\ldots,q_6$ be nonnegative integers satisfying
\begin{align*}
|L_i|\ge c+q_i
\end{align*}
for $i\in[6]$ and some integer $c$. Let
\begin{align*}
q_{(1)}\le q_{(2)}\le\cdots\le q_{(6)}
\end{align*}
be the increasing rearrangement of $q_1,q_2,\ldots,q_6$. If
\begin{align}
c+q_{(j)}\ge j
\label{eq:HallCriterion}
\end{align}
for each $j\in[6]$, then $L_1,L_2,\ldots,L_6$ have a system of distinct representatives.
\end{lemma}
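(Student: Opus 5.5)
We will verify Hall's condition directly: for every nonempty index set $I\subseteq[6]$ we show $\bigl|\bigcup_{i\in I}L_i\bigr|\ge |I|$. Hall's theorem then gives a system of distinct representatives for $L_1,\dots,L_6$.

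Fix $I$ with $|I|=j$. The union of the sets is at least as large as the largest single set, so
\begin{align*}
\Bigl|\bigcup_{i\in I}L_i\Bigr|\ge\max_{i\in I}|L_i|\ge c+\max_{i\in I}q_i .
\end{align*}
It remains to show $\max_{i\in I}q_i\ge q_{(j)}$. At most $j-1$ indices carry the $j-1$ smallest values $q_{(1)},\dots,q_{(j-1)}$ in the sorted order. So any $j$ indices include at least one index $i$ with $q_i\ge q_{(j)}$. More formally, list $I=\{i_1,\dots,i_j\}$ so that $q_{i_1}\le\cdots\le q_{i_j}$. Since $q_{i_1},\dots,q_{i_j}$ is a $j$-element sub-multiset of $q_1,\dots,q_6$, its largest element $q_{i_j}$ is at least the $j$-th smallest element $q_{(j)}$ of the full multiset.

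Combining this with \eqref{eq:HallCriterion} gives
\begin{align*}
\Bigl|\bigcup_{i\in I}L_i\Bigr|\ge c+q_{(j)}\ge j=|I|.
\end{align*}
This is exactly Hall's condition.

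No step is a real obstacle. The only point needing care is the order-statistic inequality $\max_{i\in I}q_i\ge q_{(|I|)}$, and ties among the $q_i$ cause no trouble because the comparison is between multisets. The hypothesis that the $q_i$ are nonnegative integers and the ambient set $\{1,\dots,57\}$ are not needed in this argument.
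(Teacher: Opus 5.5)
Your proof is correct and matches the paper's argument: bound the union over any $j$ chosen sets below by its largest member, use that some index among any $j$ has $q_i\ge q_{(j)}$, and invoke Hall's theorem. Your sub-multiset justification of the order-statistic step is just a more explicit version of the paper's one-line remark.
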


\begin{proof}
For $k\in[6]$, we arbitrarily choose $k$ sets from $L_1,L_2,\ldots,L_6$, and let $S$ be the union of these $k$ sets. Among the corresponding $k$ values of $q_i$, at least one of them is at least $q_{(k)}$. Hence 
\begin{align*}
|S|\ge c+q_{(k)}\ge k.
\end{align*}
So, by Hall's theorem, we have a system of distinct representatives for $L_1,L_2,\ldots,L_6$.
\end{proof}

\subsection{Private vertices and an auxiliary graph}

A vertex $x\in W_i$ is called \emph{private} if
\begin{align*}
N(x)\cap D_1=\{w_i\}.
\end{align*}
Thus if $x$ is private, then $r(x)=|N(x)\cap D_1|=1$. Let $P_i$ be the set of private vertices in $W_i$. We call $P_i$ the $i$-th \emph{private vertex class}. An edge $w_ix\in B$ is called a \emph{private edge} if $x\in P_i$, and we let $\{w_ix:x\in P_i\}$ be the $i$-th \emph{private edge class}. The sets $P_1,P_2,\dots,P_6$ are pairwise disjoint. Since every nonprivate vertex in $W_i$ contributes at least $1$ to $q_i$, we have
\begin{equation}\label{eq:private}
|P_i|\ge5-q_i.
\end{equation}

Define an auxiliary graph $K$ with vertex set $B$, where two vertices in $K$ are adjacent if and only if the corresponding edges in $B$ do \emph{not} conflict in $G$. Thus a clique in $K$ corresponds exactly to an induced matching in $G$ consisting of edges in $B$.

\begin{lemma}\label{lem:privateconflict}
Let $x\in P_i$ and $y\in P_j$ with $i\ne j$. Then the two vertices in $K$ corresponding to $w_ix$ and $w_jy$ are adjacent if and only if $xy\notin E(G)$.
\end{lemma}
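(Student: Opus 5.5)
By definition of $K$, the vertices $w_ix$ and $w_jy$ are adjacent in $K$ exactly when the edges $w_ix$ and $w_jy$ do not conflict in $G$. The plan is therefore to show that these two edges conflict if and only if $xy\in E(G)$. Since conflicting means being at distance at most $2$ in $L(G)$, I will check the two possibilities separately. First I show the edges never share an endpoint. Then I show that any edge joining an endpoint of one to an endpoint of the other must be $xy$.

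\emph{Distance $1$.} The edges $w_ix$ and $w_jy$ share an endpoint only if one of the following holds.
\begin{itemize}
\item $w_i=w_j$: impossible, since $i\ne j$.
\item $w_i=y$ or $w_j=x$: impossible, since $w_i,w_j\in D_1$ while $x,y\in D_2$.
\item $x=y$: impossible, since $P_i\cap P_j=\emptyset$. Concretely, $x\in P_i$ means $N(x)\cap D_1=\{w_i\}$, so $x$ is not adjacent to $w_j$ and cannot lie in $W_j$.
\end{itemize}
So the two edges are distinct and non-incident.

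\emph{Distance $2$.} The edges are then at distance $2$ exactly when some edge joins an endpoint of $w_ix$ to an endpoint of $w_jy$. There are four candidates.
\begin{itemize}
\item $w_iw_j$: not an edge, since $D_1$ is independent (triangle-freeness with $v$).
\item $w_iy$: not an edge, since $y$ is private to $j$, so $N(y)\cap D_1=\{w_j\}$ and $w_i\notin N(y)$.
\item $w_jx$: not an edge, by the same reasoning with $x$ private to $i$.
\item $xy$: the only remaining candidate.
\end{itemize}
Hence the two edges conflict if and only if $xy\in E(G)$. If $xy\in E(G)$, the path $w_i x y w_j$ puts them at distance $2$, so they conflict. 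If $xy\notin E(G)$, none of the four connecting edges exists and they do not conflict.

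There is no real obstacle here. The only point needing care is using privacy in both directions, for $x$ and for $y$, to exclude the cross edges $w_iy$ and $w_jx$.
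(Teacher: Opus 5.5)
Your proof is correct and follows essentially the same route as the paper. The two edges are disjoint because the private classes are disjoint, and independence of $D_1$ together with privacy of $x$ and $y$ rules out $w_iw_j$, $w_jx$ and $w_iy$, leaving $xy$ as the only possible connecting edge; your explicit endpoint-by-endpoint check is just slightly more detailed than the paper's one-line remark.
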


\begin{proof}
The edges $w_ix$ and $w_jy$ are disjoint because the private sets are pairwise disjoint. Since $D_1$ is independent,
\begin{align*}
w_iw_j\notin E(G).
\end{align*}
As $x\in P_i$ and $y\in P_j$, we have
\begin{align*}
xw_j\notin E(G),
\qquad
yw_i\notin E(G).
\end{align*}
Hence the only possible edge joining an endpoint of $w_ix$ to an endpoint of $w_jy$ is $xy$. Therefore the two private edges do not conflict if and only if $xy\notin E(G)$.
\end{proof}

\subsection{The sparse-overlap case}

We first handle the case where every $q_i$ is at most $1$.

\begin{lemma}\label{lem:sparse}
If
\begin{align*}
\max_{i\in[6]}q_i\le1,
\end{align*}
then $G$ has a strong $57$-edge coloring.
\end{lemma}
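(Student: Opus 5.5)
The plan is to color everything except the root edges with old colors, spend the two new colors $\alpha,\beta$ on five suitably chosen edges of $B$, and then color $e_1,\dots,e_6$ via Lemma~\ref{lem:Hall}. First apply Lemma~\ref{lem:rooted} with root $v$, which colors every edge except $e_1,\dots,e_6$ with the old colors $1,\dots,55$. Next I will find a set $S\subseteq B$ of $t=5$ edges, split as an induced matching $M_\alpha$ of size $3$ and an induced matching $M_\beta$ of size $2$. I erase the old colors on $S$ and recolor $M_\alpha$ with $\alpha$ and $M_\beta$ with $\beta$.

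This recoloring is a valid partial strong edge coloring. The colors $\alpha,\beta$ are used nowhere else, and each new color class is an induced matching, so the only constraint is internal to each class. Edges of $M_\alpha$ may conflict with edges of $M_\beta$, since they receive different colors.

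Lemma~\ref{lem:erasing} with $t=5$, $k=2$ then gives $|L_i|\ge 2+q_i+5-2=5+q_i$. Apply Lemma~\ref{lem:Hall} with $c=5$. Since $\max q_i\le1$, every $q_{(j)}\ge0$, so \eqref{eq:HallCriterion} holds for $j\le5$. By \eqref{eq:qpositive} we have $q_1=q_2=1$, so at least two of the $q_i$ equal $1$, hence $q_{(6)}=1$ and $5+q_{(6)}=6$. Thus the lists have a system of distinct representatives. Coloring each $e_i$ with its representative finishes the proof: the root edges pairwise conflict and so need distinct colors, and the lists $L_i$ already exclude all colors, old or new, on conflicting edges.

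The main step is constructing $M_\alpha$ and $M_\beta$. I will use private edges together with Lemma~\ref{lem:privateconflict}. Since $q_i\le1$, \eqref{eq:private} gives $|P_i|\ge4$ for every $i$, and the classes are disjoint. Each private vertex $x\in P_i$ has exactly one neighbor in $D_1$, namely $w_i$, and $v\notin N(x)$. Hence $x$ has at most $5$ neighbors among private vertices of other classes.

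For $M_\alpha$, choose $x_1\in P_1$. Its at most $5$ private neighbors are spread over the five classes $P_2,\dots,P_6$, so some $P_j$ contains at most one of them. Pick $x_j\in P_j$ nonadjacent to $x_1$. The vertices $x_1,x_j$ have at most $10$ private neighbors in the remaining four classes, which have at least $16$ vertices in total. So some class $P_\ell$ contains at most $2<4$ of them, and I pick $x_\ell\in P_\ell$ adjacent to neither. By Lemma~\ref{lem:privateconflict}, $\{w_1x_1,w_jx_j,w_\ell x_\ell\}$ is an induced matching.

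For $M_\beta$ I repeat the first step with a second vertex $x_1'\in P_1\setminus\{x_1\}$. Among the other five classes, which have at least $20$ vertices, $x_1'$ has at most $5$ neighbors. So I can choose $y\in P_m$ with $m\ne1$, $y$ nonadjacent to $x_1'$, and $y$ different from the at most two vertices already used in $M_\alpha$. The counting leaves room for this: at most $5+2=7$ vertices are excluded among at least $20$. Then $\{w_1x_1',w_my\}$ is an induced matching disjoint from $M_\alpha$ as an edge set, giving $|S|=5$.

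The expected obstacle is only this counting. A single induced matching of size $4$, which would give $t-k=3$ with $k=1$, does not follow from the same greedy count: the fourth step can face $15$ forbidden vertices against only $12$ candidates. That is why I split $S$ into two color classes of sizes $3$ and $2$, which still gives $t-k=3$.
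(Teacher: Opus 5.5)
Your proposal is correct and follows the paper's proof. You recolor three pairwise non-conflicting private edges (a triangle in $K$) with $\alpha$ and a disjoint non-conflicting pair of private edges with $\beta$, so that $t=5$, $k=2$ give $|L_i|\ge 5+q_i$, and you finish with Lemma~\ref{lem:Hall} using $q_{(6)}=1$. The only difference is that you find the triangle and the extra pair by direct pigeonhole counting, whereas the paper argues by contradiction (completely joined private classes would force a vertex of degree at least $10$, resp.\ $15$); this is a cosmetic variation.
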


\begin{proof}
By \eqref{eq:private}, for any $i\in[6]$,
\begin{equation}\label{eq:Pi4}
|P_i|\ge4
\end{equation}
We first show that $K$ contains a triangle whose three vertices correspond to three private edges.

Choose $x\in P_1$. For $j=2,3,\ldots,6$, let
\begin{align*}
A_j=\{y\in P_j:xy\notin E(G)\}.
\end{align*}
The five sets $P_2,P_3,\ldots,P_6$ contain at least $20$ vertices in total. One of the six neighbors of $x$ is $w_1$, so $x$ has at most five neighbors in $\bigcup_{j=2}^6 P_j$. Hence
\begin{equation}\label{eq:Asum}
\sum_{j=2}^6|A_j|\ge15.
\end{equation}
Suppose that $K$ does not contain a triangle whose three vertices correspond to three private edges. If $j\ne k$, $y\in A_j$, and $z\in A_k$, then $xy,xz\notin E(G)$. If we also have $yz\notin E(G)$, then Lemma~\ref{lem:privateconflict} implies that the three vertices in $K$ corresponding to the private edges $xw_1$, $yw_j$, and $zw_k$ are pairwise adjacent, and hence form a triangle, a contradiction. Thus any two nonempty $A_j,A_k$ with $j\ne k$ are completely joined in $G$.

Choose $j$ with $A_j\ne\varnothing$ and let $y\in A_j$. Since $A_j\subseteq P_j\subseteq W_j$ and $|W_j|=5$, we have $|A_j|\le5$. Hence \eqref{eq:Asum} gives
\begin{align*}
\sum_{k\ne j}|A_k|\ge10.
\end{align*}
But $y$ is adjacent to every vertex counted in this sum, contradicting $d(y)=6$. Therefore $K$ contains a triangle $T$ whose three vertices correspond to three private edges in $G$.

These three private edges belong to three distinct private edge classes, since two private edges having the same endpoint in $D_1$ conflict. Let these three private edges be
\begin{align*}
w_{i_1}x_1,\qquad w_{i_2}x_2,\qquad w_{i_3}x_3,
\end{align*}
where $x_\ell\in P_{i_\ell}$ for $\ell\in[3]$. In particular, $i_1,i_2,i_3$ are distinct. For each $i\in[6]$, let
\begin{align*}
\widetilde P_i=P_i\setminus\{x_1,x_2,x_3\}.
\end{align*}
Since at most one vertex is removed from each $P_i$, \eqref{eq:Pi4} gives
\begin{align*}
|\widetilde P_i|\ge3
\end{align*}
for every $i\in[6]$.

Suppose that there is no edge in $K$ joining two vertices corresponding to private edges $w_ix$ and $w_jy$ with
\[
x\in\widetilde P_i,\qquad y\in\widetilde P_j,\qquad i\ne j.
\]
Then for every such pair of $x$ and $y$, the vertices in $K$ corresponding to $w_ix$ and $w_jy$ are nonadjacent. By Lemma~\ref{lem:privateconflict}, $xy\in E(G)$. Hence, for any $i\in[6]$ and any $x\in\widetilde P_i$, the vertex $x$ is adjacent to every vertex in $\widetilde P_j$ for each $j\ne i$. Therefore
\begin{align*}
d(x)\ge\sum_{j\ne i}|\widetilde P_j|
\ge5\cdot3=15,
\end{align*}
contradicting $d(x)=6$.

Thus there is an edge $e$ in $K$ whose endpoints correspond to private edges $w_ix$ and $w_jy$ for some
\[
x\in\widetilde P_i,\qquad y\in\widetilde P_j.
\]
Since $x,y\notin\{x_1,x_2,x_3\}$, neither endpoint of $e$ is a vertex in $T$. 

Recolor with $\alpha$ the three private edges corresponding to the vertices in $T$, and recolor with $\beta$ the two private edges corresponding to the endpoints of $e$. Each new color class is an induced matching. Here, for Lemma~\ref{lem:erasing}, we have
\begin{align*}
t=5,
\qquad
k=2.
\end{align*}
So Lemma~\ref{lem:erasing} gives
\begin{align*}
|L_i|\ge5+q_i.
\end{align*}
Now we apply Lemma~\ref{lem:Hall}. For $j\le5$ we have $5+q_{(j)}\ge j$. Moreover, \eqref{eq:qpositive} gives $q_{(6)}\ge1$, so $5+q_{(6)}\ge6$. Therefore Lemma~\ref{lem:Hall} provides distinct colors for the six root edges
$e_i=vw_i$ with $i\in[6]$. This completes the coloring.
\end{proof}

Henceforth we may assume
\begin{equation}\label{eq:qmax2}
\max_{i\in[6]}q_i\ge2.
\end{equation}

\subsection{The remaining possibilities for the auxiliary graph}

\begin{lemma}\label{lem:Ktriangle}
If $K$ contains a triangle, then $G$ has a strong $57$-edge coloring.
\end{lemma}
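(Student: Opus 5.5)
The plan is to use the triangle in $K$ directly as one new color class, with no further recoloring, and then finish with Lemma~\ref{lem:Hall}, using \eqref{eq:qpositive} and \eqref{eq:qmax2} to supply the largest two values of $q$.

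\textbf{Setting up the recoloring.} Start from the partial coloring given by Lemma~\ref{lem:rooted} with root $v$. In it, every edge except $e_1,\dots,e_6$ carries an old color from $\{1,\dots,55\}$.

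Let the triangle in $K$ have vertices corresponding to edges $f_1,f_2,f_3\in B$. By the definition of $K$, these three edges pairwise do not conflict in $G$. Hence $\{f_1,f_2,f_3\}$ is an induced matching, and it is a legitimate color class.

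Erase the old colors on $f_1,f_2,f_3$ and recolor all three with the new color $\alpha$. No edge outside $B\cup\{e_1,\dots,e_6\}$ changes color, so the coloring of the edges other than the root edges remains a strong edge coloring.

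\textbf{Applying Lemma~\ref{lem:erasing}.} Here $S=\{f_1,f_2,f_3\}$, so $t=3$ and $k=1$. The lemma gives, for every $i\in[6]$,
\begin{align*}
|L_i|\ge 2+q_i+3-1=4+q_i .
\end{align*}

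\textbf{Applying Lemma~\ref{lem:Hall} with $c=4$.} We must check that $4+q_{(j)}\ge j$ for each $j\in[6]$.

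For $j\le 4$ this is automatic, since $q_{(j)}\ge 0$.

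For $j=5$ we need $q_{(5)}\ge 1$. By \eqref{eq:qpositive}, $q_1\ge1$ and $q_2\ge1$, so at least two of the $q_i$ are positive. Therefore the second largest value satisfies $q_{(5)}\ge1$.

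For $j=6$ we need $q_{(6)}\ge 2$. This is exactly the standing assumption \eqref{eq:qmax2}.

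Thus Lemma~\ref{lem:Hall} yields distinct colors $c_i\in L_i$ for the six root edges. Since the root edges pairwise conflict, distinctness is required among them. Each $c_i$ lies in $L_i$, so it avoids every color on an edge of $N_s(e_i)$ other than the root edges. Assigning $c_i$ to $e_i$ therefore completes a strong $57$-edge coloring of $G$.

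\textbf{Main obstacle.} The only delicate point is the Hall criterion for the top two indices. This is where the earlier reductions are used: \eqref{eq:qmax2} covers $j=6$, and the $4$-cycle through $w_1,v,w_2$ covers $j=5$ via \eqref{eq:qpositive}. Everything else is a direct substitution into Lemmas~\ref{lem:erasing} and~\ref{lem:Hall}.
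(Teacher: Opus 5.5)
Your proof is correct and follows the paper's argument essentially verbatim. You recolor the induced matching given by the triangle in $K$ with $\alpha$ and obtain $|L_i|\ge 4+q_i$ from Lemma~\ref{lem:erasing} with $t=3$, $k=1$; you then verify the Hall criterion of Lemma~\ref{lem:Hall} using $q_{(5)}\ge 1$ from \eqref{eq:qpositive} and $q_{(6)}\ge 2$ from \eqref{eq:qmax2}.
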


\begin{proof}
The three $B$-edges corresponding to the three triangle vertices form an induced matching in $G$. Recolor them all with $\alpha$. Then for Lemma~\ref{lem:erasing} we have $t=3$ and $k=1$, so
\begin{align*}
|L_i|\ge4+q_i.
\end{align*}
By \eqref{eq:qpositive}, at least two $q_i$ are positive, hence
\begin{align*}
q_{(5)}\ge1.
\end{align*}
By \eqref{eq:qmax2},
\begin{align*}
q_{(6)}\ge2.
\end{align*}
Therefore in Lemma~\ref{lem:Hall} we have
\begin{align*}
4+q_{(j)}\ge j
\end{align*}
for $j\in[6]$, so we can color the six root edges $e_i$ with $i\in[6]$.
\end{proof}

\begin{lemma}\label{lem:Kmatching}
If $K$ contains a matching of size $2$, then $G$ has a strong $57$-edge coloring.
\end{lemma}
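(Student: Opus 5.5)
The plan is to use the matching of size $2$ in $K$ to recolor four edges of $B$ with the two new colors, then finish with Lemma~\ref{lem:erasing} and Lemma~\ref{lem:Hall}. Let the matching consist of the $K$-edges $f_1f_2$ and $f_3f_4$, where $f_1,f_2,f_3,f_4$ are four distinct edges of $B$. Adjacency in $K$ means non-conflict in $G$, so $\{f_1,f_2\}$ and $\{f_3,f_4\}$ are each induced matchings of $G$.

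First, I erase the old colors on $f_1,f_2,f_3,f_4$. I then recolor $f_1,f_2$ with $\alpha$ and $f_3,f_4$ with $\beta$. Each new color class is an induced matching, and the two new colors are distinct, so no conflict arises between the classes. Conflicts with edges that keep old colors are impossible, since $\alpha,\beta$ are unused elsewhere. Hence this is a valid partial strong edge coloring of $G$ minus the root edges.

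Next, I apply Lemma~\ref{lem:erasing} with $t=4$ and $k=2$, which gives
\begin{align*}
|L_i|\ge 4+q_i
\end{align*}
for every $i\in[6]$. This is the same bound as in the proof of Lemma~\ref{lem:Ktriangle}, so Lemma~\ref{lem:Hall} applies with $c=4$. For $j\le4$ the inequality $4+q_{(j)}\ge j$ is trivial. For $j=5$, \eqref{eq:qpositive} shows at least two $q_i$ are positive, so $q_{(5)}\ge1$. For $j=6$, \eqref{eq:qmax2} gives $q_{(6)}\ge2$. Lemma~\ref{lem:Hall} therefore yields distinct available colors for $e_1,\dots,e_6$.

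It remains to check that these distinct representatives complete a strong coloring. The root edges pairwise conflict, since they share $v$, so they must receive distinct colors, and the system of distinct representatives provides exactly that. Each chosen color lies in $L_i$, so it avoids every colored edge conflicting with $e_i$, including the recolored edges in $B$. The only point needing care is that the four edges are genuinely distinct, so that $t=4$; this holds because they are the endpoints of a matching in $K$.
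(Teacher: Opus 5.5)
Your proposal is correct and matches the paper's proof: you recolor the two matched pairs with $\alpha$ and $\beta$, and you apply Lemma~\ref{lem:erasing} with $t=4$, $k=2$ to get $|L_i|\ge 4+q_i$. You then verify the Hall criterion using $q_{(5)}\ge1$ from \eqref{eq:qpositive} and $q_{(6)}\ge2$ from \eqref{eq:qmax2}; your extra checks (the four edges are distinct, and the recoloring is a valid partial strong coloring) only make explicit what the paper leaves implicit.
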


\begin{proof}
This matching of size $2$ represents two disjoint pairs of nonconflicting $B$-edges in $G$. Recolor one pair with $\alpha$ and the other pair with $\beta$. Then we have $t=4$ and $k=2$ for Lemma~\ref{lem:erasing}, so once again
\begin{align*}
|L_i|\ge4+q_i.
\end{align*}
The same inequalities $q_{(5)}\ge1$ and $q_{(6)}\ge2$ used in Lemma~\ref{lem:Ktriangle} verify the hypothesis of Lemma~\ref{lem:Hall}. So by Lemma~\ref{lem:Hall}, we can color the six root edges $e_i$ with $i\in[6]$.
\end{proof}

Therefore, we may assume that $K$ is triangle-free and any matching in $K$ has only one edge.

\begin{lemma}\label{lem:starcase}
If $K$ has at least one edge, then $G$ has a strong $57$-edge coloring.
\end{lemma}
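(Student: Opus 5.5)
Since $K$ is triangle-free, has matching number $1$ and has at least one edge, the edges of $K$ form a star. Indeed, two edges of $K$ must share a vertex, and three pairwise intersecting edges without a common vertex would form a triangle. So there is a vertex $f\in B$ such that every edge of $K$ contains $f$. Equivalently, the $29$ edges of $B\setminus\{f\}$ pairwise conflict in $G$.

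The coloring step itself is easy. Pick an edge $fg$ of $K$ and recolor the two $B$-edges $f,g$ with $\alpha$. Lemma~\ref{lem:erasing} with $t=2$, $k=1$ gives $|L_i|\ge 3+q_i$. By Lemma~\ref{lem:Hall} with $c=3$, it then suffices to have
\begin{align*}
q_{(4)}\ge1,\qquad q_{(5)}\ge2,\qquad q_{(6)}\ge3 .
\end{align*}
Using $\beta$ on a further single edge does not help, since it raises $t$ and $k$ together. So the whole proof reduces to showing that the strong structural condition ``$B\setminus\{f\}$ is pairwise conflicting'' forces these three inequalities on the $q_i$.

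To prove them I would exploit Lemma~\ref{lem:privateconflict} and its non-private analogue. Two disjoint $B$-edges $w_ix$ and $w_jy$ with $i\ne j$ conflict only if one of the following holds: $xy\in E(G)$, $x\in N(w_j)$, or $y\in N(w_i)$. Write $f=w_{i_0}x_0$. For $i\ne j$, every private vertex $x\in P_i\setminus\{x_0\}$ must then be adjacent in $G$ to every private vertex of $P_j\setminus\{x_0\}$. Since $x$ has only five neighbours besides $w_i$, its private neighbours summed over all $j\ne i$ total at most $5$. Thus
\begin{align*}
\sum_{j\ne i}|P_j\setminus\{x_0\}|\le5
\end{align*}
whenever $P_i\setminus\{x_0\}\ne\varnothing$. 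Combined with \eqref{eq:private}, $|P_j|\ge5-q_j$, this caps how many indices can have small $q_j$. I would argue by contradiction for each of the three inequalities in turn.

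First suppose $q_{(4)}=0$. Then four classes have $|P_j|=5$, so the sum over three of them already exceeds $5$, which is impossible. Next suppose $q_{(5)}\le1$. Then five classes have $|P_j|\ge4$; even after removing $x_0$, a private vertex in one of them would need at least $4\cdot4-1=15>5$ private neighbours. Finally suppose $q_{(6)}\le2$. Then all six classes have $|P_j|\ge3$, so such a vertex needs at least $5\cdot3-1=14$ private neighbours.

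The main obstacle is to make this degree count airtight. One must handle the exceptional vertex $f$ carefully, including the case where $x_0$ is not private. One must also check that the inequalities needed for Hall really follow; if the crude count leaves a gap, I would additionally count conflicts forced among non-private edges, using the triangle-freeness of $G$ to rule out edges inside each $W_i$. Once the three inequalities are established, Lemma~\ref{lem:Hall} colors $e_1,\dots,e_6$ and completes the strong $57$-edge coloring.
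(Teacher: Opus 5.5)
Your proposal is correct and follows essentially the same route as the paper. Both arguments use the star structure of $K$, recolor one $K$-edge with $\alpha$ to get $|L_i|\ge 3+q_i$, and then use the complete joining of the private classes (minus the center edge) to force $q_{(4)}\ge1$, $q_{(5)}\ge2$, $q_{(6)}\ge3$. The worry in your last paragraph is unnecessary: removing $x_0$ is harmless when $x_0$ is not private, since it then lies in no $P_j$. Moreover, your counts $14,15,14>5$ already settle all three cases, so no extra counting among non-private edges is needed.
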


\begin{proof}
If $K$ has at least one edge, then all its edges have a common endpoint, which means $K$ consists of a star and possibly some isolated vertices. Let $c$ be the center of this star. Choose an arbitrary edge in $K$. Its two endpoints correspond to two nonconflicting $B$-edges in $G$. Recolor these two edges with $\alpha$. Thus $t=2$, $k=1$, and Lemma~\ref{lem:erasing} gives
\begin{equation}\label{eq:starlist}
|L_i|\ge3+q_i.
\end{equation}
Let $e_c$ be the edge in $B$ corresponding to the center $c$. For each $i\in[6]$, define
\begin{align*}
P_i'=\{x\in P_i:w_ix\ne e_c\}.
\end{align*}
By \eqref{eq:private}, we have
\begin{equation}\label{eq:Pprime}
|P_i'|\ge|P_i|-1\ge4-q_i.
\end{equation}

Let $x\in P_i'$ and $y\in P_j'$ with $i\ne j$. In $K$, the vertices corresponding to $w_ix$ and $w_jy$ are both different from the center $c$. Since every edge in $K$ is incident with $c$, these two vertices are nonadjacent in $K$, and hence $w_ix$ and $w_jy$ conflict. Lemma~\ref{lem:privateconflict} yields $xy\in E(G)$. Thus if $P_i'$ and $P_j'$ are nonempty, then they are completely joined in $G$.

We now derive the lower bounds on the ordered $q_i$ needed for Lemma~\ref{lem:Hall}. There can be at most three indices with $q_i=0$, because otherwise four corresponding sets $P_i'$ would each have size at least $4$ by \eqref{eq:Pprime}, and a vertex $v$ in one of them would have at least $3\cdot4=12$ neighbors in the other three, contradicting $d(v)=6$. Thus
\begin{equation}\label{eq:starq4}
q_{(4)}\ge1.
\end{equation}
Similarly, there can be at most four indices with $q_i\le 1$, because otherwise five corresponding sets $P_i'$ would each have size at least $3$, and a vertex $v$ in one of them would have at least $4\cdot3=12$ neighbors in the other four, contradicting $d(v)=6$. Thus
\begin{equation}\label{eq:starq5}
q_{(5)}\ge2.
\end{equation}
Finally, there can be at most five indices with $q_i\le 2$, because otherwise all six $P_i'$ would each have size at least $2$, and a vertex $v$ in one of them would have at least $5\cdot2=10$ neighbors in the other five, contradicting $d(v)=6$. Thus
\begin{equation}\label{eq:starq6}
q_{(6)}\ge3.
\end{equation}
By \eqref{eq:starq4}--\eqref{eq:starq6}, for each $j\in[6]$,
\begin{align*}
3+q_{(j)}\ge j,
\end{align*}
which, together with \eqref{eq:starlist}, allows us to apply Lemma~\ref{lem:Hall} to complete the coloring.
\end{proof}

\begin{lemma}\label{lem:emptycase}
If $K$ is edgeless, then $G$ has a strong $57$-edge coloring.
\end{lemma}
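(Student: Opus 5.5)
The plan is to do no erasing or recoloring at all. We color the six root edges directly from the partial coloring given by Lemma~\ref{lem:rooted}, and we use the fact that $K$ being edgeless forces strong structural constraints that make the $q_i$ large.

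\textbf{Lists for the root edges.} Start from the strong edge coloring of $G-v$ with the old colors $1,\dots,55$. By Lemma~\ref{lem:exact}, $e_i$ conflicts with exactly $60-q_i$ edges. Five of these are the other root edges, which are uncolored. So at most $55-q_i$ old colors are forbidden, and both new colors $\alpha,\beta$ are free. This is the computation in the proof of Lemma~\ref{lem:erasing} with $t=k=0$, and it gives
\begin{align*}
|L_i|\ge 57-(55-q_i)=2+q_i .
\end{align*}
Lemma~\ref{lem:erasing} is stated only for $k\in\{1,2\}$, so I would simply repeat its short proof in this case. To apply Lemma~\ref{lem:Hall} with $c=2$ it then suffices to show
\begin{align*}
q_{(3)}\ge1,\qquad q_{(4)}\ge2,\qquad q_{(5)}\ge3,\qquad q_{(6)}\ge4 .
\end{align*}
The conditions for $j=1,2$ are automatic.

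\textbf{Structure from edgelessness.} Since $K$ has no edges, any two $B$-edges conflict. By Lemma~\ref{lem:privateconflict}, for $i\ne j$ every $x\in P_i$ is adjacent to every $y\in P_j$. A private vertex $x\in P_i$ has $w_i$ as its only neighbor in $D_1$, so it has at most $5$ further neighbors. Hence, for any set $I$ of indices with $i\in I$ and $P_i\neq\varnothing$,
\begin{align*}
\sum_{j\in I\setminus\{i\}}|P_j|\le5 .
\end{align*}
We combine this with \eqref{eq:private}, namely $|P_j|\ge5-q_j$, in the same way as in Lemma~\ref{lem:starcase}. The argument there loses one vertex to the star center; here nothing is lost, so the bounds improve by one.

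\textbf{The counting, threshold by threshold.} Each step assumes too many small $q_i$ and picks a vertex $x$ in one of the corresponding private classes.
\begin{itemize}
\item If three indices had $q_i=0$, their private classes would have size $\ge5$, and $x$ would have $\ge10$ neighbors outside $w_i$. Hence $q_{(3)}\ge1$.
\item If four indices had $q_i\le1$, the classes would have size $\ge4$, giving $\ge12$ such neighbors. Hence $q_{(4)}\ge2$.
\item If five indices had $q_i\le2$, the classes would have size $\ge3$, giving $\ge12$ neighbors. Hence $q_{(5)}\ge3$.
\item If all six had $q_i\le3$, the classes would have size $\ge2$, giving $\ge10$ neighbors. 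Hence $q_{(6)}\ge4$.
\end{itemize}
Each is a contradiction with $d(x)=6$. These four bounds verify \eqref{eq:HallCriterion} with $c=2$. Lemma~\ref{lem:Hall} then gives distinct colors for $e_1,\dots,e_6$ from their lists, which completes a strong $57$-edge coloring.

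The only point needing care is the first step. One must check that no colored edge conflicting with $e_i$ was overlooked, so that the bound $|L_i|\ge 2+q_i$ is valid without any recoloring. The rest is the same pigeonhole counting already used in Lemma~\ref{lem:starcase}, now with slack to spare.
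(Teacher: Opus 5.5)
Your proposal is correct and follows the paper's proof essentially step for step. Both use no recoloring, the direct bound $|L_i|\ge 2+q_i$ from Lemma~\ref{lem:exact}, complete joins between private classes via Lemma~\ref{lem:privateconflict}, and the same four pigeonhole thresholds giving $q_{(3)}\ge1$, $q_{(4)}\ge2$, $q_{(5)}\ge3$, $q_{(6)}\ge4$ for Lemma~\ref{lem:Hall} with $c=2$; your slightly sharper count (at most $5$ neighbors besides $w_i$ rather than $d(x)=6$) only adds slack.
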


\begin{proof}

No recoloring is needed. For each $i\in[6]$, let $L_i$ be the set of colors in $\{1,2,\ldots,57\}$ available for $e_i$. By Lemma~\ref{lem:exact}, exactly $60-q_i$ edges conflict with $e_i$, and five of them are the other five root edges that are uncolored. Therefore
\begin{equation}\label{eq:emptylist}
    |L_i|\ge 57-\bigl((60-q_i)-5\bigr)=2+q_i.
\end{equation}

Since $K$ is edgeless, any two private edges belonging to distinct private edge classes conflict. Lemma~\ref{lem:privateconflict} therefore implies that any two nonempty private vertex classes $P_i$ and $P_j$ are completely joined in $G$.

If three $q_i$ are $0$, then the three corresponding private vertex classes each have size $5$ by \eqref{eq:private}, and a vertex $v$ in one of them would have at least $2\cdot 5=10$ neighbors in the other two, contradicting $d(v)=6$. Thus
\begin{equation}\label{eq:emptyq3}
q_{(3)}\ge1.
\end{equation}
If four $q_i$ are at most $1$, then the four corresponding private vertex classes each have size at least $4$, and a vertex $v$ in one of them would have at least $3\cdot 4=12$ neighbors in the other three, contradicting $d(v)=6$. Thus
\begin{equation}\label{eq:emptyq4}
q_{(4)}\ge2.
\end{equation}
If five $q_i$ are at most $2$, then the five corresponding private vertex classes each have size at least $3$, and a vertex $v$ in one of them would have at least $4\cdot 3=12$ neighbors in the other four, contradicting $d(v)=6$. Thus
\begin{equation}\label{eq:emptyq5}
q_{(5)}\ge3.
\end{equation}
Finally, if all six $q_i$ are at most $3$, then all six private vertex classes have size at least $2$, and a vertex $v$ in any one class would have at least $5\cdot 2=10$ neighbors in the other five, contradicting $d(v)=6$. Thus
\begin{equation}\label{eq:emptyq6}
q_{(6)}\ge4.
\end{equation}
By \eqref{eq:emptyq3}--\eqref{eq:emptyq6}, for each $j\in[6]$,
\begin{align*}
2+q_{(j)}\ge j,
\end{align*}
which, together with \eqref{eq:emptylist}, allows us to apply Lemma~\ref{lem:Hall} to complete the coloring.
\end{proof}

The preceding lemmas exhaust all possibilities for $K$, so we have proven the following proposition.

\begin{proposition}\label{prop:C4}
If $G$ is $6$-regular, triangle-free, and contains a $4$-cycle, then
\begin{align*}
\chi'_s(G)\le57.
\end{align*}
\end{proposition}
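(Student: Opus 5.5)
The proof of Proposition~\ref{prop:C4} is a case analysis on the auxiliary graph $K$, closing each case with the lemmas of this section. I would fix the setup first. Choose the root $v$ on a $4$-cycle as above, and apply Lemma~\ref{lem:rooted} with root $v$. This colors every edge except the six root edges $e_1,\dots,e_6$ with the old colors $1,\dots,55$. The quantities $q_i$ then satisfy \eqref{eq:qpositive}.

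The first split is on $\max_i q_i$.
\begin{itemize}
\item If $\max_i q_i\le 1$, Lemma~\ref{lem:sparse} already gives the strong $57$-edge coloring.
\item Otherwise \eqref{eq:qmax2} holds. The remaining cases are distinguished purely by the structure of $K$.
\end{itemize}

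Under \eqref{eq:qmax2}, I would split according to $K$ as follows.
\begin{itemize}
\item If $K$ contains a triangle, apply Lemma~\ref{lem:Ktriangle}.
\item If $K$ contains a matching of size $2$, apply Lemma~\ref{lem:Kmatching}.
\item Otherwise $K$ is triangle-free and has matching number at most $1$.
\end{itemize}

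The only step needing a short justification is the claim, used in Lemma~\ref{lem:starcase}, that such a $K$ is either edgeless or a star plus isolated vertices. Every two edges of $K$ must share an endpoint, since $K$ has no matching of size $2$. A family of pairwise intersecting edges either has a common vertex or forms a triangle. The triangle option is excluded, so all edges share a common vertex and form a star. Hence either $K$ has an edge, handled by Lemma~\ref{lem:starcase}, or $K$ is edgeless, handled by Lemma~\ref{lem:emptycase}.

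In every branch the cited lemma extends the partial coloring to a strong edge coloring of all of $G$ with colors from $\{1,\dots,57\}$. It does so by recoloring a few $B$-edges with the new colors $\alpha,\beta$ and then assigning the root edges via Lemma~\ref{lem:Hall}. Any recolored edges use new colors forming induced matchings, and the root edges receive distinct available colors, so the final coloring is strong. Since the cases exhaust all possibilities for $q$ and $K$, this gives $\chi'_s(G)\le 57$.

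I do not expect a real obstacle at this stage. All the counting has been done in the preceding lemmas, and only the exhaustiveness of the case split on $K$ needs checking.
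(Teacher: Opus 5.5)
Your proposal is correct and follows the paper's own argument. It splits on $\max_i q_i$ (Lemma~\ref{lem:sparse} versus \eqref{eq:qmax2}), then splits on the structure of $K$ into the triangle, $2$-matching, star and edgeless cases, each closed by the corresponding lemma; your short argument that a triangle-free $K$ with no matching of size $2$ is either edgeless or a star plus isolated vertices correctly fills in the step the paper just asserts at the start of Lemma~\ref{lem:starcase}.
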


\section{Proof of the main theorem}
To prove Theorem~\ref{thm:main}, as mentioned earlier, it suffices to prove it for $6$-regular graphs of girth at most $4$. Thus, combining Proposition~\ref{prop:triangle} and Proposition~\ref{prop:C4}, we have proven Theorem~\ref{thm:main}.

\section*{Declaration of generative AI use}
The author used OpenAI's GPT-5.6 Sol and GPT-6 Astra models during the preparation of this manuscript for assistance with mathematical exploration and verification, as well as with drafting and language editing. The author independently reviewed, verified, and edited the content and takes full responsibility for the final manuscript.

\end{document}